\makeatletter \@namedef{subjclassname@2010}{%
  \textup{2010} Mathematics Subject Classification}
\newcounter{thm} \numberwithin{thm}{section}
\newtheorem{Theorem}[thm]{Theorem}
\newtheorem{Proposition}[thm]{Proposition}
\newtheorem{Lemma}[thm]{Lemma}
\newcommand{\CC}[0]{\mathbb C}
	\newcommand{\ZZ}[0]{\mathbb Z}
\renewcommand{\aa}[0]{\textbf{\textit{a}}}
\renewcommand{\tt}[0]{\textbf{\textit{t}}}
\renewcommand{\hat}[1]{\widehat{#1}}
\renewcommand{\mod}[1]{\ (\text{mod }#1)}
\newcommand{\lr}[1]{\left(#1\right)}
\newcommand{\Var}[0]{\text{Var}}
\begin{document}


\baselineskip=17pt



\title{Littlewood's problem for sets with multidimensional structure}

\author[Brandon Hanson]{Brandon Hanson} \address{University of Georgia\\
Athens, GA}
\email{brandon.w.hanson@gmail.com}
\date{}
\maketitle
\begin{abstract}
    We give $L^1$-norm estimates for exponential sums of a finite sets $A$ consisting of integers or lattice points. Under the assumption that $A$ possesses sufficient multidimensional structure, our estimates are stronger than those of McGehee-Pigno-Smith and Konyagin. These theorems improve upon past work of Petridis.
\end{abstract}
\section{Introduction}

Let $A$ be a finite set of integers. The relationship between the additive structure of $A$ and the exponential sum\footnote{Throughout, we will use the notation $e(z)=e^{2\pi i z}$.} \[F(t)=\sum_{a\in A}e(at)\] is well-documented (see \cite{TV}, Chapter 4). In particular, we have
\[\int_0^1\left|F(t)\right|^{2k}dt=|\{(a_1,\ldots,a_{2k})\in A^{2k}:a_1+\cdots+a_k=a_{k+1}+\cdots+a_{2k}\}|,\]
so that the larger the $2k$'th moment of $F$, the more additive structure $A$ possesses. For $1\leq p<2$, we expect the $L^p$-norms of $F$ to be smaller for additively structured sets. This led Littlewood to conjecture (see, for instance \cite{Littlewood}) that 
\begin{equation}\label{littlewoodConjecture}
\inf_{\substack{A\subseteq \ZZ\\ |A|=N}}\int_0^1\left|\sum_{a\in A}e(at)\right|dt=\int_0^1\left|\sum_{n=1}^Ne(nt)\right|dt.
\end{equation}
It was a great feat when the estimate
\begin{equation}\label{logBound}
    \inf_{\substack{A\subseteq \ZZ\\ |A|=N}}\int_0^1\left|\sum_{a\in A}e(at)\right|dt\geq C\int_0^1\left|\sum_{n=1}^Ne(nt)\right|dt
\end{equation}
was established by McGehee, Pigno and Smith in \cite{MPS} and independently by Konyagin in \cite{Konyagin} for some absolute constant $C>0$. Here we will record the theorem of McGehee, Pigno and Smith as it will be used repeatedly in this article.
\begin{Theorem}\label{MPS}
Let $a_1<\ldots<a_n$ be a sequence of integers and let $u_1,\ldots,u_n$ be complex numbers. Then
\[\int_0^1\left|\sum_{j=1}^nu_je(a_jt)\right|dt\geq C_{MPS}\sum_{j=1}^n\frac{|u_j|}{j},\] where $C_{MPS}>0$ is an absolute constant.
\end{Theorem}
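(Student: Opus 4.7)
The plan is to prove the bound by duality. For each $j$ with $u_j\neq 0$, set $\epsilon_j=u_j/|u_j|$, and $\epsilon_j=0$ otherwise. If one can produce a measurable $g:\RR/\ZZ\to\CC$ with $\|g\|_\infty\leq C_{MPS}^{-1}$ whose Fourier coefficients satisfy $\hat g(a_j)=\epsilon_j/j$ for $1\leq j\leq n$, then by orthogonality of characters
\[
\int_0^1 F(t)\,\bar{g(t)}\,dt=\sum_{j=1}^n u_j\,\bar{\hat g(a_j)}=\sum_{j=1}^n\frac{|u_j|}{j},
\]
while the left side is at most $\|F\|_1\|g\|_\infty$, yielding the inequality. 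Everything therefore reduces to constructing such a bounded $g$ given any unimodular sequence $(\epsilon_j)$.

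For the construction I would decompose the index set dyadically into $I_k=\{j:2^{k-1}\leq j<2^k\}$ and build $g=\sum_{k\geq 1}g_k$, with $g_k$ responsible for the frequencies $\{a_j:j\in I_k\}$. The naive trigonometric polynomial $g_k(t)=2^{-k}\sum_{j\in I_k}\epsilon_j e(a_j t)$ places each relevant Fourier coefficient at the correct order of magnitude but has $L^\infty$ norm potentially of size $1$, giving a fatal logarithmic loss when summed across the roughly $\log n$ dyadic scales. The essential idea is to replace the naive $g_k$ with a genuinely bounded function matching the required Fourier coefficients on $I_k$ while producing only a controllable error on indices $>2^k$, which is then absorbed into the construction of $g_{k+1},g_{k+2},\ldots$. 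This is achieved through Hardy-space techniques: at each stage one selects a bounded analytic function (i.e.\ an element of $H^\infty$ of the disk) matching the required spectrum on $I_k$ and uses $H^1$--$L^\infty$ duality, together with a Riesz-product type construction, to bound the sup norm of the increment.

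The main obstacle is precisely this uniform $L^\infty$ control: a crude estimate at each dyadic scale gives a $\log n$ loss, which is exactly the trivial bound the theorem must beat (indeed $\int_0^1|\sum_{n=1}^N e(nt)|\,dt$ is of order $\log N$). Overcoming it requires using the ordering $a_1<\cdots<a_n$ in an essential way: because each $g_k$ is constructed from bounded analytic functions, the residual errors it introduces at later indices decay geometrically in $k$, so summing $\|g_k\|_\infty$ yields an absolute constant. The delicate point --- and the reason the theorems of McGehee--Pigno--Smith and Konyagin were so long outstanding --- is the careful interplay between the Hardy-space construction and the dyadic iteration, both of which crucially rely on the order structure of the frequency set rather than on any arbitrary labeling.
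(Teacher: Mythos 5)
The paper offers no proof of this statement at all: it is recorded verbatim as the theorem of McGehee, Pigno and Smith \cite{MPS} and used as a black box throughout, so there is nothing internal to compare your argument against except the original source. Measured against that source, your proposal correctly sets up the duality reduction (find $g$ with $\|g\|_\infty\leq C_{MPS}^{-1}$ and $\hat g(a_j)$ essentially equal to $\epsilon_j/j$, then pair against $F$) and correctly identifies the dyadic decomposition of the index set. One small inaccuracy in the setup: exact interpolation $\hat g(a_j)=\epsilon_j/j$ is more than the method delivers; the construction only achieves $\Re\bigl(\bar{\epsilon_j}\,\hat g(a_j)\bigr)\geq c/j$, which is all the duality inequality needs, and you should phrase the target that way.

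The genuine gap is that the entire content of the theorem lies in the step you assert rather than perform. The sentences ``this is achieved through Hardy-space techniques'' and ``the residual errors it introduces at later indices decay geometrically in $k$'' are exactly the claims that constitute the proof, and nothing in your write-up substantiates them. Concretely, the MPS argument takes the block polynomials $h_k(t)=2^{-k}\sum_{j\in I_k}\epsilon_j e(a_jt)$, notes that Plancherel gives $\|h_k\|_2\leq 2^{-k/2}$ (not merely that the na\"ive bound is $O(1)$), and then damps each $h_k$ by an outer function of the form $\exp\bigl(-(|h_k|+iH|h_k|)\bigr)$, where $H$ is the conjugate-function operator; the function $g$ is assembled as a telescoping sum of the damped blocks multiplied by products of the later outer factors. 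The ordering $a_1<\cdots<a_n$ enters because these multipliers have one-sided spectrum, so they perturb Fourier coefficients only in one direction, and one must then verify two separate estimates: that the full product is uniformly bounded (this uses $\|h_k\|_2\leq 2^{-k/2}$ and the elementary inequality $|e^{-z}-1+z|\leq |z|^2$ on the right half-plane), and that the perturbations leave a positive proportion of each target coefficient intact. Without carrying out this construction and these two verifications, the proposal is a description of the strategy, not a proof; as it stands it would be equally consistent with the (false) conclusion that no such bounded $g$ exists.
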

The estimate (\ref{logBound}) leaves open a few questions. First, it remains to establish the sharp constant, i.e. to prove (\ref{littlewoodConjecture}). Second, given a positive constant $C$, one would like to characterize the sets $A$ for which \[
    \int_0^1\left|\sum_{a\in A}e(at)\right|dt\leq C\int_0^1\left|\sum_{n=1}^Ne(nt)\right|dt,
\] a question sometimes referred to as the \emph{Inverse Littlewood Problem}, see \cite{GreenICM}. This article concerns the latter problem and we interpret it as follows: if $A$ possesses some structure which is decidedly unlike an arithmetic progression, can the estimate (\ref{logBound}) be improved? Specifically, we will explore how the notion of dimension can be leveraged. Such questions have already been investigated by Petridis \cite{Petridis}, and where appropriate we will compare results.

The first notion of dimension we will explore is quite literal - we consider $A$ a subset of the lattice $\ZZ^r$. Since $\ZZ^r$ contains one-dimensional sets, one must take steps to ensure $A$ is truly multidimensional, which we now do. 

For $i=1,\ldots,r$, let $\pi_i:\ZZ^r\to \ZZ$ denote the $i$'th coordinate projection and let \[A_i=\pi_i(A)\] denote the image of $A$ under this projection;  for $a_i\in\pi_i(A)$ let \[A_i^*(a_i)=\pi_i^{-1}(a_i)\cap A\] denote the fibre of $A$ above $a_i$. Our first estimate extends Theorem \ref{MPS} to higher dimensional sets.

\begin{Theorem}\label{basicMultiDim}
Suppose $A\subseteq \ZZ^r$ and $A_1$ is ordered as \[A_1=\{a_{1,1}<\ldots<a_{1,n}\}.\] Then we have the estimate
\[\int_{[0,1]^r}\left|\sum_{\aa\in A}e(\aa\cdot\tt)\right|d\tt\geq C_{MPS}\sum_{j=1}^n\frac{1}{j}\int_{[0,1]^{r-1}}\left|\sum_{\aa^*\in A_1^*(a_{1,j})}e(\aa^*\cdot\tt)\right|d\tt.\]
\end{Theorem}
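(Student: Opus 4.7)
The plan is to decouple the first coordinate from the remaining $r-1$ coordinates and then apply Theorem \ref{MPS} in the separated variable.

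Write $\tt=(t_1,\tt')$ with $t_1\in[0,1]$ and $\tt'\in[0,1]^{r-1}$. Every $\aa\in A$ has first coordinate equal to some $a_{1,j}$, and the remaining coordinates give an element that, viewed through the projection dropping coordinate $1$, lies in $A_1^*(a_{1,j})$. Grouping $\aa\in A$ by its first coordinate, the exponential sum factors as
\[
\sum_{\aa\in A}e(\aa\cdot\tt)=\sum_{j=1}^n e(a_{1,j}t_1)\,u_j(\tt'),\qquad u_j(\tt')=\sum_{\aa^*\in A_1^*(a_{1,j})}e(\aa^*\cdot\tt'),
\]
where I identify $A_1^*(a_{1,j})$ with its image in $\ZZ^{r-1}$ under the projection away from the first coordinate (this only changes each $e(\aa^*\cdot\tt)$ by the factor $e(a_{1,j}t_1)$ already pulled out).

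Next, I would use Fubini to write
\[
\int_{[0,1]^r}\Bigl|\sum_{\aa\in A}e(\aa\cdot\tt)\Bigr|d\tt=\int_{[0,1]^{r-1}}\!\!\left(\int_0^1\Bigl|\sum_{j=1}^n u_j(\tt')\,e(a_{1,j}t_1)\Bigr|dt_1\right)d\tt'.
\]
For each fixed $\tt'$ the inner integral is an honest one-dimensional exponential sum with distinct integer frequencies $a_{1,1}<\cdots<a_{1,n}$ and coefficients $u_1(\tt'),\ldots,u_n(\tt')$, so Theorem \ref{MPS} applies and yields the pointwise (in $\tt'$) lower bound $C_{MPS}\sum_{j=1}^n|u_j(\tt')|/j$.

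Finally, I would integrate this bound in $\tt'$ and swap the sum with the integral (a finite sum, so no subtlety), which gives
\[
\int_{[0,1]^r}\Bigl|\sum_{\aa\in A}e(\aa\cdot\tt)\Bigr|d\tt\;\ge\;C_{MPS}\sum_{j=1}^n\frac{1}{j}\int_{[0,1]^{r-1}}|u_j(\tt')|\,d\tt',
\]
which is the claimed inequality. There is no real obstacle here: the whole content of the theorem is the observation that Theorem \ref{MPS} is a pointwise statement in the ``coefficient'' variables, so it integrates trivially against any additional parameters. The only minor care needed is in verifying the identification of $A_1^*(a_{1,j})$ as a subset of $\ZZ^{r-1}$ so that both sides of the displayed inequality refer to the same quantity.
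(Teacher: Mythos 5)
Your proposal is correct and is essentially identical to the paper's own proof: both decouple $t_1$ from $(t_2,\ldots,t_r)$, apply Theorem \ref{MPS} to the resulting trigonometric polynomial in $t_1$ with coefficients $u_j(\tt')$, and then integrate over the remaining variables. Your extra remark about identifying $A_1^*(a_{1,j})$ with its image in $\ZZ^{r-1}$ is a point the paper glosses over, but it is handled correctly.
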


We say $A\subseteq \ZZ$ is $n$-strongly 1-dimensional if $|A|\geq n$. Inductively, if $(n_1,\ldots,n_r)$ is a $r$-tuple of natural numbers then we say a set $A\subseteq\ZZ^r$ is $(n_1,\ldots,n_r)$-strongly $r$-dimensional if $|A_1|\geq n_1$ and $|A_1^*(a_1)|$ is $(n_2,\ldots,n_r)$-strongly $(r-1)$-dimensional for each $a_1\in A_1$.

\begin{Theorem}\label{multiDim}
Suppose $A\subseteq\ZZ^r$ is a  $(n_1,\ldots,n_r)$-strongly $r$-dimensional subset of $\ZZ^r$. Then
\[\int_{[0,1]^r}\left|\sum_{\aa\in A}e(\aa\cdot\tt)\right|d\tt\geq C_{MPS}^r \log(n_1)\cdots\log(n_r).\]
\end{Theorem}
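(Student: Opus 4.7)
The plan is to prove Theorem \ref{multiDim} by induction on the dimension $r$, using Theorem \ref{basicMultiDim} as the inductive engine and Theorem \ref{MPS} as the base case.

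For the base case $r=1$, the set $A$ is a finite subset of $\ZZ$ with $|A|\ge n_1$. Applying Theorem \ref{MPS} with all $u_j=1$ gives
\[\int_0^1\Bigl|\sum_{a\in A}e(at)\Bigr|dt\ge C_{MPS}\sum_{j=1}^{|A|}\frac{1}{j}\ge C_{MPS}\log(n_1),\]
using the standard comparison $\sum_{j=1}^n 1/j \ge \log(n+1)\ge \log n$ between the harmonic sum and the logarithm.

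For the inductive step, assume the result for $r-1$. Let $A\subseteq\ZZ^r$ be $(n_1,\ldots,n_r)$-strongly $r$-dimensional, and write $A_1=\{a_{1,1}<\cdots<a_{1,n}\}$ with $n=|A_1|\ge n_1$. By the definition of strong $r$-dimensionality, each fibre $A_1^*(a_{1,j})$ is an $(n_2,\ldots,n_r)$-strongly $(r-1)$-dimensional subset of $\ZZ^r$ supported on a single hyperplane $\{x_1=a_{1,j}\}$; via the projection to the last $r-1$ coordinates it can be regarded as a strongly $(r-1)$-dimensional subset of $\ZZ^{r-1}$, and the corresponding exponential integral is unchanged since the factor $e(a_{1,j}t_1)$ has modulus one. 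The inductive hypothesis therefore gives
\[\int_{[0,1]^{r-1}}\Bigl|\sum_{\aa^*\in A_1^*(a_{1,j})}e(\aa^*\cdot\tt)\Bigr|d\tt\ge C_{MPS}^{r-1}\log(n_2)\cdots\log(n_r)\]
for every $j$. Substituting this lower bound into the conclusion of Theorem \ref{basicMultiDim} and bounding $\sum_{j=1}^n 1/j \ge \log(n_1)$ as before yields
\[\int_{[0,1]^r}\Bigl|\sum_{\aa\in A}e(\aa\cdot\tt)\Bigr|d\tt\ge C_{MPS}\cdot\log(n_1)\cdot C_{MPS}^{r-1}\log(n_2)\cdots\log(n_r),\]
which is the desired estimate.

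Given that Theorem \ref{basicMultiDim} is assumed, no genuine obstacle arises; the argument is essentially bookkeeping of the induction hypothesis over fibres. The only minor point to verify is that the fibre lower bound from the induction hypothesis is uniform in $j$ (depending only on $(n_2,\ldots,n_r)$ and not on the particular fibre), which is exactly how the strong-dimensionality condition is defined, so this passes through cleanly and the harmonic sum collapses to the final $\log(n_1)$ factor.
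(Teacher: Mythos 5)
Your proof is correct and follows exactly the paper's own argument: induction on $r$ with Theorem \ref{MPS} as the base case, Theorem \ref{basicMultiDim} applied to the first coordinate, the uniform inductive bound on each fibre $A_1^*(a_{1,j})$, and the harmonic sum $\sum_{j\le n}1/j\ge\log n_1$. The remark about identifying each fibre with a strongly $(r-1)$-dimensional subset of $\ZZ^{r-1}$ is a detail the paper leaves implicit, but it is the same proof.
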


Here we have strived to make the dependence on the implicit constant from Theorem \ref{MPS} explicit. This estimate is an improvement on Theorem 1.2 in \cite{Petridis} and is best-possible up to the constant $C_{MPS}^r$.

We now move to the case of subsets $A$ of $\ZZ$, which are one dimensional but have a structure if higher dimensional sets. As motivation, recall that a \emph{generalized arithmetic progression of rank} $2$ is a set of the form
\[G=\{am+bn:1\leq m\leq M,\ 1\leq n\leq N\}.\] These sets arise as projections of boxes in $\ZZ^2$, hence we think of them as possessing multidimensional structure. To guarantee that the elements $am+bn$ are distinct, it is sufficient to impose the condition $aM<b$. It is this sort of condition, which can be viewed as a multiscale condition, that motivates the last theorem of this article. To state it, we begin with appropriate notions of a multidimensional subset of $\ZZ$. We say $A\subseteq\ZZ$ is $n$-strongly $1$-dimensional if $|A|\geq n$. For $\delta_1,\ldots,\delta_{r-1}>0$, we define inductively that a finite set $A\subseteq \ZZ$ is $(\delta_1,\ldots,\delta_{r-1};n_1,\ldots,n_r)$-strongly $r$-dimensional if 
there are numbers $d_1$ and $d_2$ with $d_2>(2+\delta_1)d_1$ and such that
\[A=\bigcup_{k\in I}A_k+kd_2\]
for some set $I$ of consisting of at least $n_1$ integers and subsets $A_k\subseteq\{-d_1,\ldots,d_1\}$ which are each $(\delta_2,\ldots,\delta_{r-1};n_2,\ldots,n_r)$-strongly $(r-1)$-dimensional.

To get a sense of this definition, it is best to think of strongly 2-dimensional sets; higher dimensional sets are handled by an iterative argument. The easiest example of a $(\delta,n)$-strongly 2-dimensional set is a union of intervals $A_k$ of length $n$, separated by gaps of length $\delta n$. One needs that $n$ is somewhat large - if $n$ were 1, then this set would be an arithmetic progression; one also requires the gaps between intervals - without them, our set would be an ordinary interval. However our definition allows, for instance, to pass to very sparse subsets of such a set. In general, each set $A_k$ can be compared to the fibres $A_1^*(a_k)$ in the $\ZZ^2$ setting, and then we only require that these fibres are sufficiently large (but independently of $d_1$). The condition that $d_2>(2+\delta)d_1$ imposes a gap of size at least $\delta d_1$ between any two $A_k$. This gap is a substitute for the fact that the fibres of a projection $\pi:\ZZ^2\to \ZZ$ are independent.

\begin{Theorem}\label{multiDimZ}
Let $\delta,\ldots,\delta_{r-1}>0$ and $n_1,\ldots,n_r$ be positive integers satisfying
\[n_i\geq \pi^32^{21}C_{MPS}^3\prod_{j=i}^r(\log(n_j))^3\]for each $i$.
Suppose $A$ is a $(\delta_1\ldots,\delta_{r-1};n_1,\ldots,n_r)$-strongly $r$-dimensional subset of $\ZZ$. Then
\[\int_0^1\left|\sum_{a\in A}e(at)\right|dt\geq C_{\delta_1\ldots,\delta_{r-1}}\log(n_1)\cdots\log(n_r),\]
where
\[C_{\delta_1\ldots,\delta_{r-1}}=C_{MPS}^r(2^9\pi)^{-r}\prod_{j=1}^{r-1}(2+\log(1+2/\delta_j))^{-1}.\]
\end{Theorem}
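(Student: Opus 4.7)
The plan is to proceed by induction on $r$. The base case $r=1$ is Theorem \ref{MPS} directly, since the claimed constant $C_{MPS}/(2^9\pi)$ is much smaller than $C_{MPS}$. For the inductive step, decompose a strongly $r$-dimensional set as $A=\bigcup_{k\in I}(A_k+kd_2)$, where each $A_k\subseteq\{-d_1,\ldots,d_1\}$ is $(\delta_2,\ldots,\delta_{r-1};n_2,\ldots,n_r)$-strongly $(r-1)$-dimensional; factor
\[F(t)=\sum_{k\in I}e(kd_2t)F_k(t),\qquad F_k(t)=\sum_{a\in A_k}e(at),\]
and invoke the inductive hypothesis to get $\|F_k\|_1\geq C_{\delta_2,\ldots,\delta_{r-1}}\log(n_2)\cdots\log(n_r)$ for every $k\in I$.

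The central idea is to compare $F$ to its natural two-dimensional lift $\tilde F(s,u)=\sum_{k\in I}e(ks)F_k(u)$, observing that $F(t)=\tilde F(d_2t,t)$ and that $\tilde F$ is the exponential sum of the lattice set $B=\{(k,a):k\in I,\ a\in A_k\}\subseteq\ZZ^2$. Theorem \ref{basicMultiDim} applied to $B$ (with first-coordinate projection $I$) combined with the inductive hypothesis yields
\[\int_{[0,1]^2}|\tilde F(s,u)|\,ds\,du\;\geq\; C_{MPS}\sum_{j=1}^{|I|}\frac{\|F_{k_j}\|_1}{j}\;\geq\; C_{MPS}\,C_{\delta_2,\ldots,\delta_{r-1}}\log(n_1)\cdots\log(n_r).\]
The induction closes once we establish the transfer inequality $\|F\|_1\geq\|\tilde F\|_1/C_{\delta_1}$ where $C_{\delta_1}\leq 2^9\pi(2+\log(1+2/\delta_1))$.

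To prove the transfer inequality I would fix a trigonometric polynomial $V$ on $[0,1]$ with $\hat V(l)=1$ for $|l|\leq d_1$, $\hat V(l)=0$ for $|l|\geq d_2-d_1$, and $\|V\|_1\leq C_{\delta_1}$ (a de la Vall\'ee-Poussin-type kernel, smoothed in the transition region to achieve the logarithmic $L^1$-norm). Set $\chi_0(s,u)=\overline{\tilde F(s,u)}/|\tilde F(s,u)|$ where $\tilde F\neq0$, and $\chi_0=0$ otherwise, so $|\chi_0|\leq1$ and $\int\tilde F\chi_0\,ds\,du=\|\tilde F\|_1$. Convolving in the $u$-variable gives $\chi:=V*_u\chi_0$, which satisfies $\|\chi\|_\infty\leq C_{\delta_1}$ and whose Fourier coefficients $\hat\chi(j,l)=\hat V(l)\hat\chi_0(j,l)$ are supported in $\ZZ\times(-d_2+d_1,d_2-d_1)$. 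The pullback $\Phi(t):=\chi(d_2t,t)/C_{\delta_1}$ is a function on $[0,1]$ with $\|\Phi\|_\infty\leq1$. A direct Fourier-expansion computation—using crucially that $(k,a)\mapsto kd_2+a$ is injective on pairs with $|a|<d_2-d_1$, which holds by $d_2>(2+\delta_1)d_1$—shows that $\int_0^1F(t)\overline{\Phi(t)}\,dt=\|\tilde F\|_1/C_{\delta_1}$, and duality then yields the transfer inequality.

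The hard part will be the sharp $L^1$-bound on $V$: a naive trapezoidal choice gives only $\|V\|_1=O(1/\delta_1)$, whereas the stated bound $C_{\delta_1}=O(\log(1+2/\delta_1))$ requires a smoother Fourier-side construction such as an iterated Fej\'er mean or a plateau with a suitably smooth cutoff, and pinning down the precise constant $2^9\pi$ is a delicate calculation. The numerical growth hypothesis $n_i\geq\pi^32^{21}C_{MPS}^3\prod_{j=i}^r(\log n_j)^3$ presumably enters in controlling lower-order error terms—whether from the quantitative replacement of $\sum_{j=1}^n1/j$ by $\log n$ with explicit constants, or from approximation errors inherent to the pullback construction above—so that the explicit numerical constant in the final $C_{\delta_1,\ldots,\delta_{r-1}}$ is achieved rather than a constant off by a bounded factor.
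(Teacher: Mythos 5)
Your strategy is sound and is genuinely different from the paper's. The paper never forms the two-dimensional lift $\tilde F$; instead it first thins $I$ to a sparse residue class $I(q;s)$ (Lemmas \ref{goodModulus} and \ref{thinning}), then in Proposition \ref{MainProp} rescales $t\mapsto qd_2t$, freezes each $f_{k}$ on unit intervals to reduce to a genuine one-variable application of Theorem \ref{MPS}, and controls the discretization error with Bernstein's inequality (Lemmas \ref{bernstein}, \ref{numerical}). That error term $2\pi d_1/(qd_2)$ is exactly why the paper needs the thinning (to keep $J\leq q^{1/2}$) and the numerical hypothesis $n_i\geq \pi^32^{21}C_{MPS}^3\prod_j(\log n_j)^3$; it also costs the factor $\log(n_1^{1/3}/8)\geq\frac14\log n_1$. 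Your duality/transference argument has no discretization at all, so if completed it uses all of $I$, needs no hypothesis relating $n_1$ to the other $n_j$, and gives a better constant --- the cost of descending from $\ZZ^2$ to $\ZZ$ is only $\|V\|_1$. Two points you flag as difficulties are in fact already resolved by the paper's toolkit: the kernel $V$ you want is precisely $\hat K_{M,N}$ from Lemma \ref{Kernel} with $N=d_1$ and $M\approx\delta_1 d_1/2$ (Dirichlet times Fej\'er, i.e.\ indicator convolved with a triangle on the Fourier side), whose $L^1$-norm is $8+4\log(1+N/M)\leq 4(2+\log(1+2/\delta_1))$, comfortably inside your budget of $2^9\pi(2+\log(1+2/\delta_1))$; and the frequency bookkeeping $(j,l)\mapsto jd_2+l$ being one-to-one against $\operatorname{supp}\hat F$ only requires $\hat V$ to vanish for $|l|\geq d_2-d_1$, which the gap $d_2>(2+\delta_1)d_1$ provides.

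The one step you must justify more carefully is the pullback $\Phi(t)=\chi(d_2t,t)$: your $\chi_0$ is merely an a.e.-defined $L^\infty$ function and the curve $\{(d_2t,t)\}$ has measure zero, so the restriction is not a priori meaningful and $\|\Phi\|_\infty\leq\|\chi\|_\infty$ is not automatic. The clean fix is to invoke duality on $\TT^2$ first: for any $\eps>0$ choose a trigonometric polynomial $\chi_0$ with $\|\chi_0\|_\infty\leq1$ and $\Re\int\tilde F\chi_0\geq\|\tilde F\|_1-\eps$, so that $\chi=V*_u\chi_0$ is continuous, the restriction is pointwise, and the Fourier computation $\int_0^1F\overline{\Phi}\,dt=\frac{1}{\|V\|_1}\int_{[0,1]^2}\tilde F\chi_0$ is a finite-sum identity. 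With that repair the transfer inequality and hence the induction go through.
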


This theorem is also best-possible up to the  constant, as the lower bound is realized by an appropriately chosen $r$-dimensional arithmetic progression, see Theorem 3.3 in \cite{Shao}. Estimates for multidimensional subsets of $\ZZ$ were established in Theorem 1.3 of \cite{Petridis} as well. There, the bounds are likely not as sharp as in Theorem \ref{multiDimZ}, but the hypotheses are somewhat different, relying on the notion of a \emph{Freiman isomorphism}. It might also be noted that Theorem \ref{multiDimZ} holds for two-dimensional sets, which was not established in \cite{Petridis}.

\section*{Acknowledgements}

I am grateful for the help of Giorgis Petridis and \'Akos Magyar. They generously contributed ideas which improved the arguments in this article.

\section{Strongly multidimensional sets in $\ZZ^r$}
The basic argument relies on the solution to Littlewood's problem by McGehee-Pigno-Smith, and in particular their generalized Hardy inequality.

\begin{proof}[Proof of Theorem \ref{basicMultiDim}]
Let $(t_1,t_2,\ldots,t_r)\in[0,1]^{r}$ and write
\[\sum_{\aa\in A}e(\aa\cdot\tt)=\sum_{j=1}^ne(a_{1,j}t_1)\lr{\sum_{\aa^*\in A_1^*(a_1)}e(\aa^*\cdot(t_2,\ldots,t_r))}.\]
We interpret this as a trigonometric polynomial in the variable $t_1$ with complex coefficients. By Theorem \ref{MPS} we have
\[\int_0^1\left|\sum_{\aa\in A}e(\aa\cdot\tt)\right|dt_1\geq C_{MPS}\sum_{j=1}^n\frac{1}{j}\left|\sum_{\aa^*\in A_1^*(a_1)}e(\aa^*\cdot(t_2,\ldots,t_r))\right|.\] Integrating over $t_2,\ldots,t_r$ completes the proof.
\end{proof}

Iterated application of Theorem \ref{basicMultiDim} leads to the proof of Theorem \ref{multiDim}.

\begin{proof}[Proof of Theorem \ref{multiDim}]
We proceed by induction on $r$, and when $r=1$, this follows immediately from Theorem \ref{MPS}. By the preceding proposition, we have 
\[\int_{[0,1]^d}\left|\sum_{\aa\in A}e(\aa\cdot\tt)\right|d\tt\geq C_{MPS}\sum_{j=1}^n\frac{1}{j}\int_{[0,1]^{r-1}}\left|\sum_{\aa^*\in A_1^*(a_1)}e(\aa^*\cdot\tt)\right|d\tt,\] and each of the sets $A_1^*(a_1)$ is $(n_2,\ldots,n_r)$-strongly $(r-1)$-dimensional. By induction, 
\[\int_{[0,1]^{r-1}}\left|\sum_{\aa^*\in A_1^*(a_1)}e(\aa^*\cdot\tt)\right|d\tt\geq C_{MPS}^{r-1}\log(n_2)\cdots\log(n_r)\]
and since $n\geq r_1$ the theorem is proved.
\end{proof}

\section{Lemmata}
Throughout, we will say $f$ is a trigonometric polynomial of degree $d$ if
\[f(t)=\sum_{|n|\leq d}a_ne(nt).\] Notice $L^1$-norms are preserved if we translate the support of $\hat f$ by $d$.
We need the following.
\begin{Lemma}[Bernstein's inequality]\label{bernstein}
Let $f:[0,1]\to \CC$ be a trigonometic polynomial of degree $d$. Then
\[\|f'\|_{L^1([0,1])}\leq 2\pi d\|f\|_{L^1([0,1])}.\]
\end{Lemma}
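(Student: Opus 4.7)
The plan is to realize $f'$ as a finite linear combination of translates of $f$ and then exploit translation invariance of Lebesgue measure on $[0,1]$. The tool I have in mind is M.~Riesz's classical interpolation formula for the derivative of a trigonometric polynomial of degree at most $d$: there should exist nodes $\tau_k=(2k-1)/(4d)$, $k=1,\ldots,2d$, and real weights
\[
\lambda_k=\frac{\pi}{2d}(-1)^{k+1}\csc^2\lr{\frac{\pi(2k-1)}{4d}}
\]
with the property that every $f$ having Fourier support in $\{-d,\ldots,d\}$ satisfies
\[
f'(t)=\sum_{k=1}^{2d}\lambda_k\, f(t+\tau_k).
\]
This identity can be verified directly by testing against the basis $\{e(nt)\}_{|n|\leq d}$, which reduces to a computation with finite sums of roots of unity (or equivalently a partial-fraction expansion of $\csc^2$).

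Granted the formula, I would take $L^1$-norms of both sides. By the triangle inequality and the translation invariance of Lebesgue measure on $[0,1]$,
\[
\|f'\|_{L^1([0,1])}\leq\lr{\sum_{k=1}^{2d}|\lambda_k|}\|f\|_{L^1([0,1])}.
\]
To finish, I would evaluate $\sum_k|\lambda_k|$ via the classical cosecant identity
\[
\sum_{k=1}^{2d}\csc^2\lr{\frac{\pi(2k-1)}{4d}}=4d^2,
\]
yielding $\sum_k|\lambda_k|=(\pi/(2d))\cdot 4d^2=2\pi d$, exactly the constant claimed in the statement.

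The main obstacle is justifying the Riesz interpolation formula together with its sharp weight sum; both are standard but are precisely what pins down the sharp constant $2\pi d$. A softer route, using an explicit de la Vall\'ee Poussin-type kernel $K$ with $\hat K(n)=2\pi in$ on $|n|\leq d$ and writing $f'=K*f$ with Young's inequality, works but would produce a bound of the form $\|f'\|_1\leq Cd\|f\|_1$ with $C$ strictly larger than $2\pi$. Since the lemma is stated with the sharp constant, I would invoke the Riesz approach rather than an elementary kernel convolution.
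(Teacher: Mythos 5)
Your proposal is correct, and the constants all check out in the paper's normalization $e(nt)=e^{2\pi int}$: the weights $\lambda_k=\frac{\pi}{2d}(-1)^{k+1}\csc^2\lr{\frac{\pi(2k-1)}{4d}}$ are exactly the classical Riesz weights $\frac{(-1)^{k+1}}{4d}\csc^2(\cdot)$ rescaled by the factor $2\pi$ coming from the chain rule, and the cosecant identity $\sum_{k=1}^{2d}\csc^2\lr{\frac{\pi(2k-1)}{4d}}=(2d)^2$ does give $\sum_k|\lambda_k|=2\pi d$. (A quick sanity check at $n=d$: $e(d\tau_k)=i^{2k-1}=(-1)^{k-1}i$, so $\sum_k\lambda_k e(d\tau_k)=\frac{\pi}{2d}\cdot i\cdot 4d^2=2\pi id$, as required.) The comparison with the paper is somewhat one-sided: the paper offers no proof at all, only a citation to an exercise in Katznelson, so you are supplying an actual argument where the source supplies a pointer. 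Your route — writing $f'$ as convolution with the discrete measure $\sum_k\lambda_k\delta_{-\tau_k}$ and using translation invariance — is the standard way to reach the sharp constant, and you are right that a de la Vall\'ee Poussin kernel would lose a constant factor. Two small things remain if you want the argument self-contained: the interpolation identity itself still needs the verification you sketch (that $\sum_k\lambda_k e(n\tau_k)=2\pi in$ for all $|n|\leq d$, not just $n=\pm d$), which is a finite but nontrivial computation; and the case $d=0$ should be dispatched separately (trivially, since then $f'=0$), as the node construction presupposes $d\geq 1$.
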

\begin{proof}
See \cite[Chapter 1, Excercise 7.16]{Katznelson}.
\end{proof}

\begin{Lemma}\label{numerical}
Let $n$ be a positive integer. Then for any trigonometric polynomial $f$ of degree $d$ we have
\[\left|\|f\|_{L^1([0,1])}-\frac{1}{N}\sum_{j=1}^N|f(j/N)|\right|\leq \frac{4\pi d}{N}\|f\|_{L^1([0,1])}.\]
\end{Lemma}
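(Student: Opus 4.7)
The plan is to compare the Riemann sum to the integral by pairing each sample point $j/N$ with a subinterval and bounding the discrepancy on each subinterval using Bernstein's inequality (Lemma \ref{bernstein}). This is a standard quadrature estimate for smooth functions, adapted to $|f|$.

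First, I would set $I_j = [(j-1)/N, j/N]$ and write
\[\|f\|_{L^1([0,1])} - \frac{1}{N}\sum_{j=1}^N |f(j/N)| = \sum_{j=1}^N \int_{I_j} \bigl(|f(t)| - |f(j/N)|\bigr)\, dt,\]
so that taking absolute values and applying the (reverse) triangle inequality $\bigl||f(t)|-|f(j/N)|\bigr| \leq |f(t) - f(j/N)|$ yields the bound
\[\sum_{j=1}^N \int_{I_j} |f(t) - f(j/N)|\, dt.\]

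Next, since $f$ is a trigonometric polynomial it is smooth, so for $t \in I_j$ the fundamental theorem of calculus gives
\[|f(t) - f(j/N)| = \left|\int_t^{j/N} f'(s)\, ds\right| \leq \int_{I_j} |f'(s)|\, ds.\]
Substituting this in and integrating $dt$ over $I_j$ (which has length $1/N$), then summing, gives
\[\sum_{j=1}^N \int_{I_j} \int_{I_j} |f'(s)|\, ds\, dt \;=\; \frac{1}{N}\sum_{j=1}^N \int_{I_j} |f'(s)|\, ds \;=\; \frac{1}{N}\|f'\|_{L^1([0,1])}.\]

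Finally, I would invoke Bernstein's inequality (Lemma \ref{bernstein}) to conclude $\|f'\|_{L^1} \leq 2\pi d \|f\|_{L^1}$, and hence the error is at most $\frac{2\pi d}{N}\|f\|_{L^1([0,1])}$, which is (comfortably) within the claimed bound $\frac{4\pi d}{N}\|f\|_{L^1([0,1])}$. There is no real obstacle here; the argument is essentially a telescoping error estimate combined with Bernstein's inequality, and the factor $4\pi d$ in the statement leaves slack for the routine derivative-to-integral bound rather than reflecting any deeper difficulty.
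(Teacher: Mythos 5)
Your proof is correct, and it reaches the stated bound with room to spare. The paper takes a slightly different route to the same end: it quotes Koksma's inequality to bound the Riemann-sum error by $\frac{1}{N}\Var(|f|)$, then estimates $\Var(|f|)\leq \Var(\Re f)+\Var(\Im f)=\|(\Re f)'\|_{L^1}+\|(\Im f)'\|_{L^1}\leq 2\|f'\|_{L^1}$ before applying Bernstein, which is where the factor $4\pi d$ (rather than $2\pi d$) comes from. You instead prove the quadrature estimate from scratch: split $[0,1]$ into the intervals $I_j$, use the reverse triangle inequality $\bigl||f(t)|-|f(j/N)|\bigr|\leq |f(t)-f(j/N)|$, and control the latter by $\int_{I_j}|f'|$ via the fundamental theorem of calculus. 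This is self-contained (no appeal to Koksma or to total variation of $|f|$, whose differentiability one might otherwise worry about) and it avoids the real/imaginary-part splitting, so you get the sharper constant $2\pi d/N$. Both arguments are ultimately the same idea --- Riemann-sum error controlled by the derivative, then Bernstein --- but your execution is cleaner and strictly stronger; the paper's only advantage is brevity, since it outsources the quadrature step to a cited inequality.
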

\begin{proof}
We begin with the estimate,
\[\left|\int_0^1|f(t)|dt-\frac{1}{N}\sum_{j=1}^N|f(j/N)|\right|\leq \frac{1}{N}\Var(|f|)\]
where \[\Var(g)=\sup_{0=x_0<x_1<\ldots<x_M=1}\sum_{j=1}^M|g(x_j)-g(x_{j-1})|\]
is the total variation of $g$. This is a simple case of, for instance, Koksma's inequality, see \cite{KN}. Since
\[\Var(|f|)\leq \Var(\Re(f))+\Var(\Im(f))=\int_0^1|\Re(f)'(t)|dt+\int_0^1|\Im(f)'(t)|dt\leq 2\int_0^1|f'(t)|dt\]
the result now follows from Bernstein's inequality.
\end{proof}

One of the key ideas that goes into the proof of Theorem \ref{multiDimZ} is to amplify the gaps between the different pieces of $A$. To do so, we need a function that can isolate the various subsets $A_k$.

\begin{Lemma}\label{Kernel}
Let $M$ and $N$ be integers with $2\leq M<N$ and let $R\geq 2N+4M+1$. Then there is a function $K_{M,N}$ with the following properties:
\begin{enumerate}
    \item $K_{M,N}(k)=1$ for $|k|\leq N$,
    \item $K_{M,N}(k)=0$ for $|k|\geq N+2M$, and
    \item \[\frac{1}{R}\sum_{j=1}^{R}|\hat{K_{M,N}}(j/R)|\leq 32\pi(2+\log(1+N/M)).\]
\end{enumerate}
\end{Lemma}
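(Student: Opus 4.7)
The plan is to construct $K_{M,N}$ as a normalized convolution of two indicator sequences on $\ZZ$, compute its Fourier transform explicitly as a product of two Dirichlet-type kernels, bound its $L^1$-norm by $O(\log(N/M))$, and convert this to the Riemann-sum bound in (3) via Lemma~\ref{numerical}.

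Concretely, I would set
\[K_{M,N}=\frac{1}{2M-1}\,\one_{[-(N+M),\,N+M]}\ast\one_{[-(M-1),\,M-1]},\]
the convolution being on $\ZZ$. Counting the admissible pairs $(a,b)\in\ZZ^2$ with $a+b=k$, $|a|\leq N+M$ and $|b|\leq M-1$: for $|k|\leq N$ every such $b$ is admissible and so $K_{M,N}(k)=1$, while for $|k|\geq N+2M$ no $b$ is admissible and so $K_{M,N}(k)=0$, verifying (1) and (2). The support of $K_{M,N}$ lies in $\{|k|\leq N+2M-1\}$, so $\hat{K_{M,N}}$ is a trigonometric polynomial of degree at most $N+2M-1$, and the convolution identity gives the explicit product form
\[\hat{K_{M,N}}(t)=\frac{1}{2M-1}\cdot\frac{\sin((2N+2M+1)\pi t)}{\sin(\pi t)}\cdot\frac{\sin((2M-1)\pi t)}{\sin(\pi t)}.\]

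To estimate $\|\hat{K_{M,N}}\|_{L^1([0,1])}$ I would use the symmetry $|\hat{K_{M,N}}(1-t)|=|\hat{K_{M,N}}(t)|$ to restrict to $[0,1/2]$, where $\sin(\pi t)\geq 2t$, and apply the two classical Dirichlet-kernel bounds $|D_A(t)|\leq 2A+1$ and $|D_A(t)|\leq 1/\sin(\pi t)$ to each of the two factors. Splitting $[0,1/2]$ at the natural thresholds $1/(2(2N+2M+1))$ and $1/(2(2M-1))$ produces three pieces: a constant $O(1)$ contribution from the neighborhood of $0$ using the trivial bound $|\hat{K_{M,N}}|\leq 2N+2M+1$; the logarithmic contribution $\tfrac{1}{2}\log((2N+2M+1)/(2M-1))$ from the intermediate range, where the $(N+M)$-kernel is bounded by $1/\sin(\pi t)$ while the $(M-1)$-kernel still uses its trivial bound; and a convergent $O(1)$ tail from the range near $1/2$, where both kernels are bounded via $1/\sin(\pi t)$. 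Summing these three pieces and doubling yields $\|\hat{K_{M,N}}\|_{L^1}\leq C(2+\log(1+N/M))$ for a modest absolute constant $C$.

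Finally, since $\hat{K_{M,N}}$ has degree at most $N+2M-1$ and $R\geq 2N+4M+1$, the error multiplier $4\pi d/R$ appearing in Lemma~\ref{numerical} is at most $2\pi$, and hence
\[\frac{1}{R}\sum_{j=1}^{R}|\hat{K_{M,N}}(j/R)|\leq (1+2\pi)\|\hat{K_{M,N}}\|_{L^1}\leq 32\pi\bigl(2+\log(1+N/M)\bigr),\]
the generous constant $32\pi$ comfortably absorbing $C(1+2\pi)$. The main technical step is the $L^1$-split in the previous paragraph: one must check carefully that the logarithm arises only from the intermediate range and that both boundary contributions are absolute constants independent of $N$ and $M$. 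Everything else is bookkeeping.
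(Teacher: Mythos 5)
Your construction is correct and follows essentially the same route as the paper: a de la Vall\'ee Poussin--type kernel built by convolving an indicator of $[-(N+M),N+M]$ with a short bump of width $O(M)$, an explicit product formula for its Fourier transform, a three-range splitting of the $L^1$-integral isolating the single logarithmic contribution, and a final appeal to Lemma~\ref{numerical} using $R\geq 2N+4M+1$. The only (cosmetic) difference is that you convolve with a second box, obtaining $\frac{1}{2M-1}D_{N+M}D_{M-1}$, whereas the paper convolves with a Fej\'er weight, obtaining $\frac{1}{M}D_{N+M}F_{M-1}$; both yield the stated constants with room to spare.
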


\begin{proof}

Recall that the Dirichlet kernel of order $N$ is
\[D_N(t)=\sum_{|n|\leq N}e(nt)=\frac{\sin(\pi(2N+1)t)}{\sin(\pi t)},\]
and the Fejer kernel or order $N$ is
\[F_N(t)=\sum_{|n|\leq N}\lr{1-\frac{|n|}{N+1}}e(nt)=\frac{1}{N+1}\frac{(\sin(\pi (N+1)t))^2}{(\sin(\pi t))^2}.\]
We have
\[|D_N(t)|\leq 2N+1,\ 0\leq F_N(t)\leq N+1,\ \int_0^1 F_N(t)dt=1.\]
Now let $M$ and $N$ be integers with $M<N$ and define
\[K_{M,N}(k)=\frac{1}{M}\sum_{\substack{|n|\leq M-1\\ |n-k|\leq N+M}}\lr{1-\frac{|n|}{M}}.\]
Then
\[\hat{K_{M,N}}(t)=\frac{1}{M}D_{N+M}(t)F_{M-1}(t).\]
First, observe that if $|k|\leq N$ then 
\[K_{M,N}(k)=\frac{1}{M}\sum_{|n|\leq M-1}\lr{1-\frac{|n|}{M}}=1.\]
Meanwhile if $|k|\geq N+2M$ then $K_{M,N}(k)=0$ since the defining sum is empty. Thus we have (1) and (2).

For (3), we have
\[\int_0^1|\hat{K_{M,N}}(t)|dt=2I_1+2I_2+2I_3\]
where
\[I_1=\frac{1}{M}\int_0^\frac{1}{N+M}|D_{M+N}(t)|F_{M-1}(t)dt\leq 3,\]
\[I_2=\frac{1}{M}\int_\frac{1}{N+M}^\frac{1}{M}|D_{M+N}(t)|F_{M-1}(t)dt,\]
and
\[I_3=\frac{1}{M}\int_\frac{1}{M}^\frac12|D_{M+N}(t)|F_{M-1}(t)dt.\]
Using $2t\leq |\sin(\pi t)|\leq \pi t$ for $|t|\leq \frac{1}{2}$, we have
\begin{align*}
I_2&\leq\frac{1}{M^2}\int_\frac{1}{N+M}^\frac{1}{M}\frac{|\sin(\pi Mt)|^2}{|\sin(\pi t)|^3}dt\\
&\leq\frac{\pi^2}{8}\int_\frac{1}{N+M}^\frac{1}{M}\frac{dt}{t}\\
&\leq 2\log(1+N/M).
\end{align*}
Finally,
\[I_3\leq\frac{1}{M^2}\int_\frac{1}{M}^\frac{1}{2}\frac{1}{t^3}dt\leq 1.\]
By Lemma \ref{numerical},
\[\frac{1}{R}\sum_{j=1}^{R}|\hat{K_{M,N}}(j/R)|\leq 4\pi\|\hat {K_{M,N}}\|_{L^1([0,1])}\leq 8\pi(4+2\log(1+N/M)).\]
\end{proof}

\begin{Lemma}\label{periodic}
Let $R$ a positive integer and $K:\ZZ\to \CC$ be a periodic function with period $R$. Then
\[\int_0^1\left|\sum_{m}a_mK(m)e(mt)\right|dt\leq\frac{1}{R}\sum_{j=1}^R|\hat K(j/R)|\int_0^1\left|\sum_{m}a_me(mt)\right|dt.
\]
\end{Lemma}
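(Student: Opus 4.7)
The plan is to expand the periodic function $K$ in its discrete Fourier series and then use translation invariance of the $L^1$-norm on $[0,1]$. Since $K$ has period $R$, the DFT gives an identity of the form
\[K(m) = \frac{1}{R}\sum_{j=1}^R \hat K(j/R)\, e(jm/R),\]
where $\hat K(j/R)$ are the usual discrete Fourier coefficients of $K$ viewed as a function on $\ZZ/R\ZZ$; the $1/R$ in front is what matches the $1/R$ appearing in the statement of the lemma.

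Substituting this representation into the trigonometric polynomial and interchanging the two sums gives
\[\sum_m a_m K(m)\, e(mt) = \frac{1}{R}\sum_{j=1}^R \hat K(j/R)\sum_m a_m\, e(m(t+j/R)).\]
Taking absolute values, integrating over $t\in[0,1]$, and applying the triangle inequality produces the intermediate bound
\[\int_0^1\left|\sum_m a_m K(m) e(mt)\right|dt \leq \frac{1}{R}\sum_{j=1}^R \left|\hat K(j/R)\right|\int_0^1\left|\sum_m a_m\, e(m(t+j/R))\right|dt.\]

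To finish, I would observe that $t\mapsto \sum_m a_m e(mt)$ is $1$-periodic because each $m$ is an integer, so translation by the rational number $j/R$ preserves its $L^1$-norm over $[0,1]$. Consequently each inner integral on the right equals $\int_0^1|\sum_m a_m e(mt)|\,dt$, and pulling this common factor out of the $j$-sum delivers exactly the claimed inequality. There is no substantive obstacle: the argument is essentially a textbook application of Fourier inversion plus shift-invariance. The only point that demands real care is fixing the DFT normalization consistently, so that the $1/R$ prefactor is correctly reproduced and so that the convention is compatible with that used in Lemma \ref{Kernel} (where $\frac{1}{R}\sum_{j=1}^R |\hat K_{M,N}(j/R)|$ plays the role of a Riemann-sum approximation to $\|\hat K_{M,N}\|_{L^1([0,1])}$).
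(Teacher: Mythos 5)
Your proposal is correct and is essentially identical to the paper's proof: both expand $K$ via discrete Fourier inversion (orthogonality of characters mod $R$), apply the triangle inequality, and use $1$-periodicity of $t\mapsto\sum_m a_m e(mt)$ to equate the shifted integrals with the original. The sign convention in the DFT is immaterial since $j$ ranges over a full residue system and only $|\hat K(j/R)|$ enters.
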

\begin{proof}
By orthogonality of characters modulo $R$
\[\sum_{m}a_mK(m)e(mt)=\frac{1}{R}\sum_{j=1}^R\hat K(j/R)\sum_{m}a_me(m(t+j/R)).\]
So by the triangle inequality,
\begin{align*}
    \int_0^1\left|\sum_{m}a_mK(m)e(mt)\right|dt&\leq \frac{1}{R}\sum_{j=1}^R|\hat K(j/R)|\int_0^1\left|\sum_{m}a_me(m(t+y/R))\right|dt.
\end{align*}
\end{proof}

Given a set $I\subseteq\ZZ$, a positive integer $q$, and an arbitrary integer $s$, we define \[I(q;s)=\{k\in I:k=s\mod q\}.\]
The following lemma is used to amplify the space between the sets $A_k$.

\begin{Lemma}\label{thinning}
Let $d_1,d_2$ and $q$ be positive integers with $(2+2\delta)d_1+4\leq d_2$ for some $\delta>0$ and $q\geq 4$. Suppose $I$ is a finite set of integers, and let \[F(t)=\sum_{k\in I}f_k(t)e(d_2kt)\] where each $f_k$ is a trigonometric polynomial of degree at most $d_1$. Then for any integer $s$, we have
\[\int_0^1\left|\sum_{k\in I(q;s)} f_{k}(t)e(d_2kt)\right|dt\leq 32\pi(2+ \log(1+2/\delta))\|F\|_{L^1([0,1])}.\]
\end{Lemma}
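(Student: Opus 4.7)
The plan is to build a periodic ``selector'' $K : \ZZ \to \CC$ whose values, along the Fourier support of $F$, annihilate the summands with $k \not\equiv s \pmod q$ and leave those with $k \equiv s$ intact. Once such a $K$ is in hand, Lemma \ref{periodic} converts the $L^1$ norm of the thinned sum into $\|F\|_{L^1} \cdot \frac{1}{R}\sum_{j=1}^R |\hat K(j/R)|$, and the estimate from Lemma \ref{Kernel} produces the claimed constant.

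I would take $R = q d_2$, set $N = d_1$, and choose an integer $M$ with $\max(2, \lceil d_1 \delta/2 \rceil) \leq M \leq \delta d_1 + 2$. This range is non-empty by the hypothesis $(2 + 2\delta)d_1 + 4 \leq d_2$, and together with $q \geq 4$ the same hypothesis gives $R \geq 2N + 4M + 1$, so Lemma \ref{Kernel} is applicable. Define $K$ by $K(m) = K_{M,N}(\rho(m - d_2 s))$, where $\rho(n)$ denotes the representative of $n \bmod R$ in $[-R/2, R/2)$; then $K$ is periodic of period $R$.

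The key verification is that $K$ picks out the correct residue class along the Fourier support of $F$. Since each $f_k$ has degree at most $d_1$ and $d_2 > 2 d_1$, the Fourier support of $f_k(t) e(d_2 k t)$ lies in the window $\{d_2 k - d_1, \ldots, d_2 k + d_1\}$, and these windows are pairwise disjoint as $k$ ranges over $I$. For $m = d_2 k + n$ with $|n| \leq d_1$: if $k \equiv s \pmod q$, then $d_2(k - s) \equiv 0 \pmod R$, so $\rho(m - d_2 s) = n$ with $|n| \leq N$, and property (1) of Lemma \ref{Kernel} gives $K(m) = 1$; if $k \not\equiv s \pmod q$, then $d_2(k - s) \equiv d_2 r \pmod R$ for some nonzero $r$ with $|r| \leq q/2$, so $|\rho(m - d_2 s)| \geq d_2 - d_1 \geq N + 2M$, and property (2) forces $K(m) = 0$. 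Letting $a_m$ denote the Fourier coefficients of $F$, this yields the identity
\[
\sum_{k \in I(q;s)} f_k(t) e(d_2 k t) = \sum_m K(m) a_m e(mt).
\]

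Applying Lemma \ref{periodic} to this identity, and noting that $K$ is a shift of the periodic extension of $K_{M,N}$ so that $|\hat K(j/R)| = |\hat{K_{M,N}}(j/R)|$, property (3) of Lemma \ref{Kernel} together with $N/M \leq 2/\delta$ yields the claimed constant $32\pi(2 + \log(1 + 2/\delta))$. The main obstacle is calibrating $M$ against two competing constraints: it must be large enough that $\log(1 + N/M) \leq \log(1 + 2/\delta)$, yet small enough that $N + 2M \leq d_2 - d_1$ so that the selector annihilates the unwanted residue classes. The hypothesis $(2 + 2\delta) d_1 + 4 \leq d_2$ is precisely what makes both conditions simultaneously achievable.
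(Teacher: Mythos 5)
Your proposal is correct and follows essentially the same route as the paper: periodize the kernel $K_{M,N}$ of Lemma \ref{Kernel} with period $R=qd_2$ and parameters $N=d_1$, $M\approx\lceil\delta d_1/2\rceil$, check via the disjointness of the windows $\{d_2k-d_1,\ldots,d_2k+d_1\}$ that it selects exactly the residue class $k\equiv s$, and conclude with Lemma \ref{periodic}. The only cosmetic difference is that the paper normalizes to $s=0$ by translating $I$, whereas you shift the kernel instead.
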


\begin{proof}
If necessary we may replace $I$ with $I-s$ while preserving the $L^1$-norm, and so there is no loss of generality in assuming $s=0$. By definition, we can write
\[F(t)=\sum_{m}a_me(mt)\] where the coefficients $a_m$ are supported on numbers of the form \begin{equation}\label{support}
   m=d_2k+l,\ |l|\leq d_1.  
\end{equation}

Let $M=\lceil \delta d_1/2\rceil$ and $N=d_1$, and let $K_{M,N}$ be the function from Lemma \ref{Kernel}. The support of $K_{M,N}$ is contained in the interval \[[-N-2M,N+2M]\subseteq [-d_2/2,d_2/2]\] and $K_{M,N}$ is identically $1$ on $\{-d_1,\ldots,d_1\}$. Extend $K_{M,N}$ periodically with period $qd_2$. Then by Lemma \ref{periodic}, \[\int_0^1\left|\sum_{m}a_mK_{M,N}(m)e(mt)\right|dt\leq 32\pi(2+\log(1+N/M))\|F\|_{L^1([0,1])}.\]

Now $a_mK_{M,N}(m)$ is only non-zero if $m=jqd_2+l'$ with $-d_2/2 \leq l'\leq d_2/2$. By (\ref{support}), we have \[l-l'=d_2(jq-k),\] and since $|l-l'|\leq d_1+d_2/2<d_2$, this can only happen if $k=jq$ and $l=l'$. So we are left with coefficients supported on integers of the form $jqd_2+l$ with $-d_1\leq l\leq d_1$. However, $K_{M,N}$ is identically $1$ on numbers of the form $jqd_2+l$ with $-d_1\leq l\leq d_1$. In summary,
\[\sum_{m}a_mK_{M,N}(m)e(mt)=\sum_{\substack{k\in I\\k=0\mod q}}e(kd_2t)\sum_{|l|\leq d_1}a_{kd_2+l}e(lt)=\sum_{k\in I(q;0)}f_k(t)e(kd_2t).\]
\end{proof}

Finally, in order to apply Lemma \ref{thinning} effectively, we need a good modulus $q$. Such a modulus is guaranteed by the following lemma.

\begin{Lemma}\label{goodModulus}
Let $I$ be a set integers with $|I|\geq 8$. Then there are positive integers $q$ and $s$ such that 
\[|I|^{1/3}/8\leq |I(q;s)|\leq q^{1/2}\]
\end{Lemma}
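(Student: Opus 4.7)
The plan is to restrict attention to dyadic moduli $q_k=2^k$, so that the residue-class structure has a natural tree: each class modulo $2^{k+1}$ sits inside a class modulo $2^k$, and each class modulo $2^k$ splits into exactly two classes modulo $2^{k+1}$. Write $n=|I|$ and $M(q)=\max_s|I(q;s)|$. I will use three elementary facts: (a) refinement gives $M(2q)\leq M(q)$; (b) the two-fold split gives $M(2q)\geq M(q)/2$, because a maximal class of $M(q)$ elements modulo $q$ breaks into exactly two classes modulo $2q$, the larger of which has at least half; and (c) pigeonhole gives $M(q)\geq n/q$.

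Define $k^{*}$ to be the smallest non-negative integer with $M(2^{k^{*}})\leq 2^{k^{*}/2}$. Such $k^{*}$ exists because $M(2^k)=1$ once $k>\log_2 n$, and the hypothesis $n\geq 8$ forces $k^{*}\geq 1$ since $M(1)=n\geq 8>1$. Put $q=2^{k^{*}}$ and pick $s$ realising $M(q)$; the upper bound $|I(q;s)|\leq q^{1/2}$ is then immediate from the definition of $k^{*}$.

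For the lower bound I combine (b) and (c). By (c), $M(q)\geq n/q$, and together with $M(q)\leq q^{1/2}$ this gives $n\leq q^{3/2}$, i.e.\ $q^{1/2}\geq n^{1/3}$. Minimality of $k^{*}$ means $M(2^{k^{*}-1})>2^{(k^{*}-1)/2}$, and applying (b) at the preceding scale yields
\[M(q)\;\geq\;\tfrac12 M(2^{k^{*}-1})\;>\;2^{(k^{*}-3)/2}\;=\;\frac{q^{1/2}}{2\sqrt{2}}\;\geq\;\frac{n^{1/3}}{2\sqrt{2}}\;>\;\frac{n^{1/3}}{8}.\]

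The only real subtlety is that neither (b) nor (c) alone suffices: applying (b) across the critical threshold $k^{*}-1\to k^{*}$ produces a bound of the form $2^{(k^{*}-3)/2}$ with no a priori link to $n$, and the link is supplied by (c) at scale $q=2^{k^{*}}$, which forces $q^{1/2}\geq n^{1/3}$ and thereby converts the $k^{*}$-centric estimate into the desired $n^{1/3}/8$ bound. Choosing dyadic scales is what makes the halving inequality (b) available and quantitatively sharp; any coarser scheme would lose exactly the constant we need to push $n^{1/3}/(2\sqrt{2})$ safely below $n^{1/3}/8$.
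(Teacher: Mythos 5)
Your argument is correct and is essentially the paper's own proof, transposed from quaternary scales $4^j$ to dyadic scales $2^k$: the same stopping rule (the first scale at which $M(q)\le q^{1/2}$), the same pigeonhole bound $M(q)\ge n/q$ to force $q^{1/2}\ge n^{1/3}$, and the same splitting inequality applied across the critical scale, with the base-$2$ choice even yielding the slightly better constant $2\sqrt{2}$ in place of $8$. One small slip in your existence argument: $M(2^k)=1$ requires $2^k$ to exceed the \emph{diameter} of $I$, not $|I|$, but the existence of $k^{*}$ is unaffected since $M(2^k)\le n\le 2^{k/2}$ once $k\ge 2\log_2 n$.
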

\begin{proof}
For each $j\geq 1$, choose any $s_j$ so that $|I(4^j;s_j)|$ is maximal. Then $|I(4^j;s_j)|\geq 4^{-j}|I|$ by the pigeonhole principle. We have \[|I(4;s_1)|\geq |I|/4\geq 2\] while for sufficiently large $j$ we have $|I(4^{j};s_{j})|=1\leq 2^{j}$. It follows that there is a minimal $j_0$ so that $|I(4^{j_0};s_{j_0})|\leq 2^{j_0}$. We let $q=4^{j_0}$, and $s=s_{j_0}$. Then
\[\frac{|I|}{q}\leq |I(q,s)|\leq q^{1/2},\]
so that in particular $|I|^{1/3}\leq q^{1/2}$. By minimality of $j_0$
\[q^{1/2}/2=2^{j_0-1}\leq |I(q/4;s_{j_0-1})|\leq 4I(q;s),\]
so that $|I(q;s)|\geq |I|^{1/3}/8$.
\end{proof}
\section{Multidimensional subsets of $\ZZ$}

The following proposition can be viewed as a sort of analog to Theorem \ref{basicMultiDim}.
\begin{Proposition}\label{MainProp}
Let $d_1,d_2$ positive integers with $(2+\delta)d_1<d_2$. Suppose $I$ is a finite set of integers, and let \[F(t)=\sum_{k\in I}f_k(t)e(d_2kt)\] where \[f_k(t)=\sum_{|n|\leq d_1}a_{n,k}e(nt).\] Let $q$ and $s$ an integers with $q>4\pi$ and suppose \[I(q;s)=\{k_1<\ldots<k_J\}.\]
Then we have have \[\|F\|_{L^1([0,1])}\geq \frac{1}{32\pi(2+\log(1+2/\delta))}\sum_{j=1}^J\|f_{k_j}\|_{L^1([0,1])}\lr{\frac{C_{MPS}}{2j}-\frac{2\pi d_1}{qd_2}}.\]
\end{Proposition}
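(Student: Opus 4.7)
The plan is to apply Lemma \ref{thinning} to extract the sub-sum indexed by $I(q;s)$, then combine a Bernstein approximation with Theorem \ref{MPS} via a crucial periodicity observation that exploits the congruence $k_j\equiv s\mod q$. Applying Lemma \ref{thinning} (with parameters adjusted so its hypothesis is met under the given $(2+\delta)d_1<d_2$) gives
\[
\|F\|_{L^1([0,1])}\ge \frac{1}{32\pi(2+\log(1+2/\delta))}\|g\|_{L^1([0,1])},\qquad g(t):=\sum_{j=1}^J f_{k_j}(t)e(d_2 k_j t),
\]
so the heart of the proof is establishing $\|g\|_{L^1}\ge \sum_j\|f_{k_j}\|_{L^1}\bigl(\tfrac{C_{MPS}}{2j}-\tfrac{2\pi d_1}{qd_2}\bigr)$.

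To bound $\|g\|_{L^1}$, I would introduce the two-variable function $G(x,t)=\sum_j f_{k_j}(x)e(d_2 k_j t)$ (so $g(t)=G(t,t)$), partition $[0,1]$ into the $qd_2$ intervals $I_m=[m/(qd_2),(m+1)/(qd_2))$ with left endpoints $t_m=m/(qd_2)$, and split
\[
\|g\|_{L^1}\ge \sum_m\int_{I_m}|G(t_m,t)|\,dt \;-\; \sum_m\int_{I_m}|g(t)-G(t_m,t)|\,dt.
\]
The error is controlled by the triangle inequality in $j$, a Fubini computation, and Bernstein's inequality (Lemma \ref{bernstein}): for each $j$ one gets $\sum_m\int_{I_m}|f_{k_j}(t)-f_{k_j}(t_m)|\,dt\le \frac{1}{qd_2}\|f_{k_j}'\|_{L^1}\le \frac{2\pi d_1}{qd_2}\|f_{k_j}\|_{L^1}$, producing the subtracted term $\frac{2\pi d_1}{qd_2}\sum_j\|f_{k_j}\|_{L^1}$ appearing in the conclusion.

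The key ingredient is lower bounding $\sum_m\int_{I_m}|G(t_m,t)|\,dt$. The critical observation is that $|G(t_m,t)|^2=G(t_m,t)\overline{G(t_m,t)}$ is a trigonometric polynomial in $t$ whose frequencies $d_2(k_j-k_i)$ all lie in $d_2q\ZZ$ (since $k_i\equiv k_j\equiv s\mod q$); hence $|G(t_m,\cdot)|$ is periodic with period $1/(qd_2)$, precisely matching $|I_m|$, and
\[
\int_{I_m}|G(t_m,t)|\,dt=\frac{1}{qd_2}\int_0^1|G(t_m,t)|\,dt\ge \frac{C_{MPS}}{qd_2}\sum_j\frac{|f_{k_j}(t_m)|}{j},
\]
the last inequality being Theorem \ref{MPS} applied to $G(t_m,\cdot)$ with coefficients $f_{k_j}(t_m)$ and ordered frequencies $d_2k_j$. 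Summing in $m$, swapping sums, and invoking Lemma \ref{numerical} for each $f_{k_j}$ (degree $d_1$, $N=qd_2$) to replace $\frac{1}{qd_2}\sum_m|f_{k_j}(t_m)|$ by $\|f_{k_j}\|_{L^1}(1-4\pi d_1/(qd_2))$ gives $\sum_m\int_{I_m}|G(t_m,t)|\,dt\ge C_{MPS}(1-4\pi d_1/(qd_2))\sum_j\|f_{k_j}\|_{L^1}/j$. Since $q>4\pi$ and $d_2>2d_1$ force $qd_2>8\pi d_1$, we have $1-4\pi d_1/(qd_2)\ge 1/2$, producing the factor $C_{MPS}/(2j)$.

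The main obstacle is spotting the periodicity: it is essential that $|G(t_m,\cdot)|^2$ (rather than $G(t_m,\cdot)$ itself) has frequencies in $d_2q\ZZ$ so that $|G(t_m,\cdot)|$ is periodic with period equal to $|I_m|$, which is what lets the short-interval integral be inflated to a full-period integral on which Theorem \ref{MPS} delivers a block-by-block bound. The residue-class hypothesis $k_j\equiv s\mod q$ is used exactly here; without it the identity relating $\int_{I_m}$ to $\int_0^1$ fails, and the bookkeeping of Bernstein and Riemann-sum errors in the remaining steps would have nothing to work with.
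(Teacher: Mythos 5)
Your proposal is correct and follows essentially the same route as the paper: Lemma \ref{thinning} to isolate the residue class, freezing the coefficients $f_{k_j}$ at the left endpoints of the $qd_2$ short intervals, Theorem \ref{MPS} on each frozen sum, Lemma \ref{numerical} to recover $\|f_{k_j}\|_{L^1}$ with the factor $\tfrac12$, and Bernstein for the error term. Your periodicity observation for $|G(t_m,\cdot)|$ (frequencies of $|G|^2$ lying in $qd_2\ZZ$) is exactly the paper's substitution $u=qd_2t$ followed by unit-period periodicity of $\sum_j c_j e(b_{k_j}u)$, just phrased without the change of variables.
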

\begin{proof}
By the Lemma \ref{thinning}, we have
\[\|F\|_{L^1([0,1])}\geq \frac{1}{32\pi(2+\log(1+2/\delta))}\int_0^1\left|\sum_{j=1}^J f_{{k_j}}(t)e({k_j}d_2t)\right|dt.\]
Next we write each ${k_j}\in I(q;s)$ as ${k_j}=b_{k_j}q+s$, so the above integral becomes
\begin{align*}
    \int_0^1\left|\sum_{j=1}^J f_{{k_j}}(t)e((b_{k_j}q+s)d_2t)\right|dt&=\int_0^1\left|\sum_{j=1}^J f_{{k_j}}(t)e(b_{k_j}qd_2t)\right|dt\\
    &=\frac{1}{qd_2}\int_{0}^{qd_2}\left|\sum_{j=1}^J f_{{k_j}}(u/qd_2)e(b_{k_j}u)\right|du.
\end{align*}
after the change of variables $u=qd_2t$. 
Next, by breaking the integral into intervals of unit length, we get
\[\frac{1}{qd_2}\sum_{m=1}^{qd_2}\int_{m-1}^m\left|\sum_{j=1}^J f_{{k_j}}(u/qd_2)e(b_{k_j}u)\right|du\geq T_1-T_2\]
where\[T_1=\frac{1}{qd_2}\sum_{m=1}^{qd_2}\int_{m-1}^{m}\left|\sum_{j=1}^Jf_{{k_j}}((m-1)/qd_2)e(b_{k_j}u)\right|du\] and \[T_2=\frac{1}{qd_2}\sum_{m=1}^{qd_2}\int_{m-1}^{m}\left|\sum_{j=1}^J(f_{{k_j}}(u/qd_2)-f_{{k_j}}((m-1)/qd_2))e(b_{k_j}u)\right|du.\]

By periodicity followed by Theorem \ref{MPS}, we have the estimate
\[T_1=\frac{1}{qd_2}\sum_{m=1}^{qd_2}\int_{0}^{1}\left|\sum_{j=1}^Jf_{{k_j}}((m-1)/qd_2)e(b_{k_j}u)\right|du\geq \frac{C_{MPS}}{qd_2}\sum_{m=1}^{qd_2}\sum_{j=1}^J\frac{|f_{k_j}((m-1)/qd_2)|}{j}.\] An application of Lemma \ref{numerical}, bearing in mind $qd_2>8\pi d_1$, yields
\[\frac{1}{qd_2}\sum_{m=1}^{qd_2}|f_{k_j}((m-1)/qd_2)|\geq\frac{1}{2}\|f_{k_j}\|_{L^1([0,1])}.\]

To complete the proof of the proposition it suffices to estimate $T_2$ appropriately. For $u$ in the interval $m-1\leq u\leq m$ we have
\begin{align*}
\left|\sum_{j=1}^J(f_{k_j}(u/qd_2)-f_{k_j}((m-1)/qd_2))e(b_{k_j}u)\right|&=
\left|\sum_{j=1}^J\int_{\frac{m-1}{qd_2}}^\frac{u}{qd_2}f_{k_j}'(v)e(b_{k_j}u)dv\right|
\\
&\leq \int_{\frac{m-1}{qd_2}}^\frac{m}{qd_2}\left|\sum_{j=1}^Jf_{k_j}'(v)e(b_{k_j}u)\right|dv
\end{align*}
by the triangle inequality and positivity. Thus we get the upper bound
\begin{align*}
    T_2&\leq \frac{1}{qd_2}\sum_{m=1}^{qd_2}\int_{\frac{m-1}{qd_2}}^\frac{m}{qd_2}\int_{m-1}^m\left|\sum_{j=1}^J e(b_{k_j}u)f_{k_j}'(v)\right|dudv\\
    &=\frac{1}{qd_2}\int_{0}^1\int_{0}^1\left|\sum_{j=1}^J e(b_{k_j}u)f_{k_j}'(v)\right|dudv\\
    &\leq\frac{1}{qd_2}\sum_{j=1}^J\int_{0}^1 \left|f_{k_j}'(v)\right|dv.\end{align*}
By Lemma \ref{bernstein}, we obtain
\[T_2\leq \frac{2\pi d_1}{qd_2}\sum_{j=1}^J\|f_{k_j}\|_{L^1([0,1])}.\]
\end{proof}

\begin{proof}[Proof of Theorem \ref{multiDimZ}]
As in the proof of \ref{multiDim}, we proceed by induction on $r$. When $r=1$, this follows immediately from Theorem \ref{MPS}.

For the inductive step, we begin by writing $A$ as
\[A=\bigcup_{k\in I}A_k+kd_2\]
for some $(n_2,\ldots,n_r)$-strongly regular sets $A_k\subseteq\{-d_1,\ldots,d_1\}$, where $d_1$ and $d_2$ with $16<4d_1<d_2$. If we let
\[f_k(t)=\sum_{a\in A_k}e(at)\]
then
\[\sum_{a\in A}e(at)=\sum_{k\in I}f_k(t)e(d_2kt)\]
is of the necessary form to apply Proposition \ref{MainProp}. If it is the case that  
\[\|f_k\|_{L^1([0,1])}\geq C_{\delta_2,\ldots,\delta_{r-1}}\log(n_1)\cdots\log(n_r)\]
for some $k$, then we can choose $q$ so large and $s$ in such a way that $I(q;s)=\{k\}$. This gives that 
\[\|F\|_{L^1([0,1])}\geq \frac{C_{MPS}}{32\pi(2+\log(1+2/\delta_1))}\|f_k\|_{L^1([0,1])},\]
yielding the theorem immediately. Thus there is no loss of generality in assuming
\begin{equation}\label{normBound}
    \|f_k\|_{L^1([0,1])}\leq C_{\delta_2,\ldots,\delta_{r-1}}\log(n_1)\cdots\log(n_r)
\end{equation}
for each $k\in I$.

Now we choose $q$ and $s$ as in Lemma \ref{goodModulus} applied to $I$ to get \[I(q;s)=\{k_1<\ldots<k_{J}\}\] satisfying
\[\frac{n_1^{1/3}}8\leq J\leq q^{1/2}.\] Then
\begin{equation}\label{estimate}
    \int_0^1\left|\sum_{a\in A}e(at)\right|dt\geq\frac{1}{2^5\pi(2+\log(1+2/\delta))}\sum_{j=1}^{J}\|f_{k_j}\|_{L^1([0,1])}\lr{\frac{C_{MPS}}{2j}-\frac{2\pi d_1}{qd_2}}.
\end{equation}
By induction,
\[\sum_{j=1}^{J}\frac{\|f_{k_j}\|_{L^1([0,1])}}{j}\geq C_{\delta_2,\ldots,\delta_{r-1}}\log(J)\log(n_2)\cdots\log(n_r)\geq \frac{1}{4}C_{\delta_2,\ldots,\delta_{r-1}}\log(n_1)\cdots\log(n_r).\]

By (\ref{normBound}), the error term in (\ref{estimate}) is at most
\[\frac{2\pi Jd_1}{qd_2}C_{\delta_2,\ldots,\delta_{r-1}}\log(n_1)\cdots\log(n_r)\leq \pi C_{\delta_2,\ldots,\delta_{r-1}}\frac{\log(n_1)\cdots\log(n_r)}{q^{1/2}}.\]
As guaranteed by Lemma \ref{goodModulus} and the hypotheses of the theorem, \[q^{1/2}\geq n_1^{1/3}/8\geq 16\pi C_{MPS}\log(n_1)\cdots\log(n_r).\]
So, we have shown
\begin{align*}
    \int_0^1\left|\sum_{a\in A}e(at)\right|dt&\geq\frac{1}{2^5\pi(2+\log(1+2/\delta))}\frac{C_{\delta_2,\ldots,\delta_{r-1}}}{2^4}\log(n_1)\cdots\log(n_r)\\
    &= C_{\delta_1,\ldots,\delta_{r-1}}\log(n_1)\cdots\log(n_r).
\end{align*}
\end{proof}

\end{document}